\newtheorem{thm}{Theorem} 
\newtheorem{cor}[thm]{Corollary}
\newtheorem{lm}[thm]{Lemma}
\newtheorem{df}[thm]{Definition}
\newtheorem{prob}[thm]{Problem}
\newcommand{\op}{\operatorname}
\newcommand{\meet}{\wedge}
\newcommand{\join}{\vee}
\newcommand{\Meet}{\bigwedge}
\renewcommand{\Join}{\bigvee}
\date{\today}
\begin{document}

\title{A Class of Infinite Convex Geometries}
\author{Kira Adaricheva}
\address{Department of Mathematical Sciences, Yeshiva University, New York, NY 10016 USA}
\email{adariche@yu.edu}
\address{and Department of Mathematics, School of Science and Technology, Nazarbayev University, Astana, Kazakhstan}
\email{kira.adaricheva@nu.edu.kz}
\author{J. B. Nation}
\address{Department of Mathematics, University of Hawaii, Honolulu, HI 96822, USA}
\email{jb@math.hawaii.edu}

\begin{abstract}
Various characterizations of finite convex geometries are well known.
This note provides similar characterizations for possibly infinite convex 
geometries whose lattice of closed sets is strongly coatomic and spatial.
Some classes of examples of such convex geometries are given.

\bigskip\noindent \textbf{Keywords:} convex geometry; closure system; anti-exchange property; join semidistributive lattice; strongly coatomic and spatial lattice
\end{abstract}

\maketitle

\section{Introduction}

There are various ways to characterize finite convex geometries;
see Chapter 3 of our \cite{AN15}, which combines results from 
Dilworth \cite{D2},
Avann \cite{Av61},
Edelman and Jamison \cite{EJ85},
Duquenne \cite{D91}, and
Monjardet \cite{Mo85}.
These characterizations can be either combinatorial or lattice theoretical
in nature.

Infinite versions of convex geometries occur in several sources, each of which 
is equivalent to a closure operator with the anti-exchange property, plus some 
finiteness conditions to provide structure. 
Crawley and Dilworth \cite{CD60} consider dually algebraic, strongly coatomic, 
locally distributive lattices.  
Semenova \cite{MS00, MS00B} extends these results to lower continuous, strongly coatomic, 
locally distributive lattices.
Adaricheva, Gorbunov and Tumanov \cite{AGT03} discuss closure operators with the 
anti-exchange property 
whose closure lattices are weakly atomic, dually spatial and atomistic.
Adaricheva and Nation \cite{AN15} are concerned with 
algebraic closure operators with the anti-exchange property.
See also Sakaki \cite{S15}, 
Adaricheva and Pouzet \cite{AP11}, and Adaricheva \cite{A14}.   

Here we consider a class of complete lattices for which most of the 
equivalent characterizations of finite convex geometries remain valid,
\emph{viz.}, lattices that are strongly coatomic and spatial.
(A complete lattice is \emph{spatial} if every element is a join of 
completely join irreducible elements.)
Our main result is Theorem~\ref{SCSgeom}, giving various characterizations
of the convex geometries within this class.
Moreover, the class is rich enough to provide manifold examples of
infinite convex geometries.

\section{The anti-exchange property}

\begin{df}\label{def:AEP}
A closure system $(X, \gamma)$ satisfies the \emph{anti-exchange property} if for all 
$x \neq y$ and all closed sets $A \subseteq X$,
 \begin{equation}
 x\in\gamma(A\cup\{y\})\text{ and }x\notin A \text{ imply that }y\notin\gamma(A\cup\{x\}).
 \tag{AEP}
 \end{equation}
\end{df}
Equivalently, a closure operator satisfies the anti-exchange property if for all closed sets 
$A \subseteq X$ and elements $x$, $y \notin A$, if 
$\gamma(A \cup \{ x \}) = \gamma(A \cup \{ y \})$ then $x=y$.

Examples of closure operators with the anti-exchange property include
\begin{itemize}
\item the convex hull operator on Euclidean space $\mathbb E^n$,
\item the convex hull operator on an ordered set,
\item the subalgebra-generated-by operator on a semilattice,
\item the algebraic-subset-generated-by operator on a complete lattice.
\end{itemize}
For a closure system $(X,\gamma)$, we will let $\op{Cld}(X,\gamma)$ denote the 
lattice of $\gamma$-closed subsets of $X$.
A closure system is \emph{zero-closure} if $\gamma(\emptyset)=\emptyset$.

\begin{df}\label{D:WCG}
A zero-closure system that satisfies the anti-exchange property is called a 
\emph{convex geometry}.
\end{df}

(This common convention is a bit awkward, as some useful closure operators with the 
anti-exchange property have a non-empty closure of the empty set.  Nonetheless, we 
shall retain it.)

A lattice is \emph{strongly coatomic} if $a < c$ in $L$ implies that there exists
$b$ such that $a \leq b \prec c$.
A closure system is strongly coatomic if its lattice of closed sets is so.

\begin{thm}\label{covCG}
For a strongly coatomic closure system $(X,\gamma)$, the following are equivalent.
\begin{itemize}
\item[(1)] $(X,\gamma)$ has the anti-exchange property.
\item[(2)] If $A\prec B$ in $\op{Cld} (X,\gamma)$, then $|B\setminus A|=1$.
\end{itemize}
\end{thm}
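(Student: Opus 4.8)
The plan is to establish the two implications separately, and to note at the outset that $(1)\Rightarrow(2)$ requires no coatomicity hypothesis, while $(2)\Rightarrow(1)$ uses strong coatomicity in an essential way.

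First I would prove $(1)\Rightarrow(2)$. Assume the anti-exchange property and let $A\prec B$ in $\op{Cld}(X,\gamma)$. As subsets of $X$ we have $A\subseteq B$ with $A\ne B$, so $B\setminus A\ne\emptyset$; fix $x\in B\setminus A$. Then $A\subsetneq\gamma(A\cup\{x\})\subseteq B$, and since $A\prec B$ this forces $\gamma(A\cup\{x\})=B$. Now if there were a second element $y\in B\setminus A$ with $y\ne x$, the same argument would give $\gamma(A\cup\{y\})=B=\gamma(A\cup\{x\})$ with $x,y\notin A$, contradicting the equivalent (injective) form of (AEP) quoted after Definition~\ref{def:AEP}. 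Hence $B\setminus A=\{x\}$, so $|B\setminus A|=1$.

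Next, for $(2)\Rightarrow(1)$, assume that every covering in $\op{Cld}(X,\gamma)$ adjoins exactly one point, and suppose $A$ is closed, $x,y\notin A$, and $\gamma(A\cup\{x\})=\gamma(A\cup\{y\})=:B$; the goal is $x=y$. Since $x\in B\setminus A$ we have $A<B$, so by strong coatomicity there is a closed set $C$ with $A\le C\prec B$. By hypothesis $(2)$, $B\setminus C=\{z\}$ for a single point $z$. If $x\in C$, then $A\cup\{x\}\subseteq C$ and hence $B=\gamma(A\cup\{x\})\subseteq\gamma(C)=C$, contradicting $C\prec B$; so $x\notin C$, and since $x\in B=C\cup\{z\}$ this gives $x=z$. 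By the symmetric argument $y=z$, so $x=y$, which is (AEP).

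I do not expect a serious obstacle here. The one step deserving care is in $(2)\Rightarrow(1)$: strong coatomicity must be invoked to produce a covering $C\prec B$ \emph{lying below} $B$ (not merely a covering above $A$), and one then has to check that neither $x$ nor $y$ can belong to this $C$ — which is exactly the closure computation $B=\gamma(A\cup\{x\})\subseteq C$ above. Everything else is routine manipulation with the closure operator and the definition of a covering in $\op{Cld}(X,\gamma)$.
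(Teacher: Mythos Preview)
Your proof is correct and follows essentially the same route as the paper's: both directions are argued exactly as in the paper, with $(2)\Rightarrow(1)$ using strong coatomicity to find $C$ with $A\le C\prec B$ and then showing $x,y\in B\setminus C$. Your version simply spells out in more detail why $x\notin C$ (the paper compresses this into the line $B=\gamma(A'\cup\{x\})=\gamma(A'\cup\{y\})$, so $x,y\in B\setminus A'$).
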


\begin{proof}
Assume that $(X,\gamma)$ has the AEP.  If $A\prec B$ in $\op{Cld} (X,\gamma)$ 
and $x$, $y \in B\setminus A$, then $\gamma(A \cup \{ x \}) = B = 
\gamma(A \cup \{ y \})$, whence $x=y$ by the AEP.

Suppose that $(X,\gamma)$ satisfies (2).  Assume that
$B = \gamma(A \cup \{ x \}) = \gamma(A \cup \{ y \}) > A = \gamma(A)$. 
As $\op{Cld}(X,\gamma)$ is strongly coatomic, there is a closed set $A'$ 
such that $A \leq A' \prec B$.  Then
$B = \gamma(A' \cup \{ x \}) = \gamma(A' \cup \{ y \})$, 
so $x$, $y \in B \setminus A'$.  By (2) we have $x=y$, as desired.
\end{proof}

The equivalence of the preceding theorem is also valid for algebraic closure systems; 
see \cite{A14} and \cite{AN15}.

\section{Strongly spatial lattices}

Our goal in this paper is to find a general class of lattices 
(not necessarily finite)  to which the characterizations of finite 
convex geometries extend naturally.  Recall that in a finite lattice,
every element is a join of join irreducible elements.

\begin{lm} \label{basicJI}
In a strongly coatomic complete lattice,
every nonzero join irreducible element is completely join irreducible.
\end{lm}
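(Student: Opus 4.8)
The plan is to argue by contraposition: suppose $j$ is a nonzero element that is \emph{not} completely join irreducible, and show it is not join irreducible either. Since $j$ is not completely join irreducible, one of two things can fail. Either $j$ is not join irreducible at all (and we are done), or $j$ is join irreducible but has no \emph{lower cover}: there is no unique largest element below $j$. So assume $j$ is join irreducible; then the set $D = \{\, x : x < j \,\}$ has no greatest element, and in particular $\Join D < j$ would force $\Join D$ to be that greatest element, so we must have $\Join D = j$. I would record this as the key starting point: a join irreducible element that is not completely join irreducible satisfies $j = \Join\{x : x < j\}$.

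Next I would invoke strong coatomicity. Pick any $a < j$; by strong coatomicity there is an element $b$ with $a \le b \prec j$. Thus $j$ has at least one lower cover $b$. Now I claim $j$ has at least two distinct lower covers. If $b$ were the \emph{only} lower cover of $j$, then every $x < j$ would satisfy $x \le b$ (otherwise $x \join b \le j$ strictly, apply strong coatomicity between $x\join b$ and $j$ to get a lower cover of $j$ above $x\join b$, which would be a lower cover distinct from $b$ since it lies above $x \not\le b$); hence $b = \Join\{x : x<j\} = j$, a contradiction. So there are two distinct lower covers $b_1 \prec j$ and $b_2 \prec j$. Then $b_1 \join b_2 \le j$, and $b_1 \join b_2 > b_1$ (as $b_2 \not\le b_1$, since $b_1,b_2$ are distinct covers), so $b_1 \join b_2 = j$. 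But $b_1 < j$ and $b_2 < j$, so this exhibits $j$ as a join of two strictly smaller elements, contradicting join irreducibility of $j$.

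I expect the main obstacle to be the bookkeeping in the middle step: showing that a join irreducible, non-completely-join-irreducible element must have two distinct lower covers. The delicate point is that strong coatomicity only guarantees a lower cover of $j$ \emph{above any given} $x < j$, so one must be careful to produce a lower cover genuinely different from a first one $b$ — this is where the hypothesis $\Join\{x : x < j\} = j$ does the real work, since if all such $x$ lay below $b$ then $b$ would be the top of $D$, contradicting that $j$ is not completely join irreducible. Once two distinct lower covers are in hand, the final contradiction with join irreducibility is immediate. Everything else (the completeness needed to form $\Join D$, the transitivity arguments) is routine.
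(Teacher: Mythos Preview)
Your argument is correct and follows the same contrapositive route as the paper: use strong coatomicity to manufacture a lower cover, then exhibit the element as a join of two strictly smaller things. The paper's version is shorter, though, and the shortcut is hiding inside your own argument. Once you have $b \prec j$ and some $x < j$ with $x \not\le b$, the covering relation already forces $x \join b = j$ (since $b < x \join b \le j$ and nothing lies strictly between $b$ and $j$), which is exactly the contradiction with join irreducibility you are after---this is the paper's proof in one line. Your assertion that ``$x \join b \le j$ strictly'' tacitly invokes join irreducibility (otherwise $x \join b = j$), so the contradiction is already in hand at that moment; the subsequent production of a second lower cover and the final display $j = b_1 \join b_2$ are valid but redundant.
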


\begin{proof}
We prove the contrapositive.  Assume that $w = \Join X$ with $x < w$ for all $x \in X$.  
Choose any $x_0 \in X$.  
Then $x_0 < w$, so by strong coatomicity there exists $c \in L$ such that $x_0 \leq c \prec w$.  
Since $\Join X = w > c$, there exists an element $x_1 \in X$ such that $x_1 \nleq c$.
But then $w = x_1 \join c$ is finitely join reducible.  
\end{proof}

A complete lattice in which every element is a join of completely join irreducible elements
is said to be \emph{spatial}.  Clearly, this is a desirable property
for any sort of ``geometry."  
In \cite{AP11}, it was shown that every weakly atomic convex geometry is spatial. 

The set of nonzero join irreducible elements of a lattice $L$ will be denoted by $\op{Ji}(L)$.
The preceding lemma says that in dealing with strongly coatomic lattices, we need not
worry about the distinction between join irreducible and completely join irreducible
elements.

Note that every complete lattice can be represented \emph{via} a closure system in various ways.
If the lattice is spatial, then a standard representation
would use the completely join irreducible elements of $L$ as the set $X$.   
The next observation relates this to condition (2) of Theorem~\ref{covCG}.

\begin{lm} 
Let $(X,\gamma)$ be a closure system such that the lattice of closed subsets
$\op{Cld}(X,\gamma)$ is strongly coatomic and satisfies the property that $A \prec B$
implies $|B \setminus A|=1$.
Then there is a one-to-one correspondence betweeen
$X \setminus \gamma(\emptyset)$ and the nonzero completely join irreducible closed sets of\/ $(X,\gamma)$.
\end{lm}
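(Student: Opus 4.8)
The plan is to exhibit the correspondence explicitly: send $x \in X \setminus \gamma(\emptyset)$ to the closed set $\gamma(\{x\})$, and conversely send a nonzero completely join irreducible closed set $C$ to the unique element of $C \setminus C_*$, where $C_*$ denotes the (unique) lower cover of $C$. Throughout I would use that $\gamma(\emptyset)$ is the least element of $\op{Cld}(X,\gamma)$ and is contained in every closed set, so that ``nonzero'' means ``properly containing $\gamma(\emptyset)$''.

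First I would analyze $p := \gamma(\{x\})$ for $x \notin \gamma(\emptyset)$. Since $x \in p \setminus \gamma(\emptyset)$, the element $p$ is nonzero, so strong coatomicity yields at least one lower cover of $p$. If $q \prec p$ is \emph{any} lower cover, then $|p \setminus q| = 1$ by hypothesis, and $x \notin q$ (otherwise $p = \gamma(\{x\}) \subseteq q < p$); hence $p \setminus q = \{x\}$, forcing $q = p \setminus \{x\}$. So $p$ has a unique lower cover $p_* = p \setminus \{x\}$.

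The crux is then to see that $p$ is completely join irreducible. Given any closed set $d$ with $d < p$, strong coatomicity produces a closed set $e$ with $d \le e \prec p$; by the previous step $e = p_*$, so $d \le p_*$. Thus $\Join \{ d \in \op{Cld}(X,\gamma) : d < p \} \le p_* < p$, which is exactly complete join irreducibility, with $p \setminus p_* = \{x\}$. I expect this to be the main obstacle in the sense that it is the step where strong coatomicity is essential: it substitutes for the algebraicity or lower-continuity hypotheses used in the earlier treatments cited in the introduction, by letting one descend from an arbitrary $d < p$ to a cover of $p$.

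It remains to check that the two maps are mutually inverse. The map $x \mapsto \gamma(\{x\})$ lands among the nonzero completely join irreducible closed sets by the above. Conversely, a nonzero completely join irreducible $C$ has a unique lower cover $C_*$ with $C_* \prec C$, so $|C \setminus C_*| = 1$, and its unique element lies outside $\gamma(\emptyset)$ since $\gamma(\emptyset) \subseteq C_*$. Starting from $x$, the computation $\gamma(\{x\}) \setminus p_* = \{x\}$ recovers $x$. Starting from $C$ with distinguished element $x$, we have $x \in C$, so $\gamma(\{x\}) \subseteq C$; were the containment proper we would get $\gamma(\{x\}) \le C_*$, contradicting $x \in \gamma(\{x\}) \setminus C_*$, whence $\gamma(\{x\}) = C$. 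Hence the two maps are inverse bijections. (Alternatively, injectivity can be obtained directly: if two such $C$, $C'$ share the distinguished element $x$, one intersects them and uses the covering hypothesis to show that $C \cap C'$ covers $C_* \cap C'_*$ and contains $x$, forcing $C = C \cap C' = C'$.)
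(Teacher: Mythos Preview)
Your proof is correct and follows essentially the same approach as the paper: both exhibit the bijection $x \mapsto \gamma(\{x\})$ with inverse $C \mapsto$ the unique point of $C \setminus C_*$, showing first that every lower cover of $\gamma(\{x\})$ must equal $\gamma(\{x\}) \setminus \{x\}$ and then concluding complete join irreducibility. Your write-up is simply more explicit than the paper's, in particular spelling out why a unique lower cover in a strongly coatomic lattice forces complete join irreducibility and verifying carefully that the two maps are mutual inverses; the paper asserts these points more tersely.
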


\begin{proof}
Always in a closure system, if $B$ is a completely join irreducible closed set, 
then $B = \gamma(x)$ for any 
$x \in B \setminus B_*$, where $B_*$ denotes the unique lower cover of $B$.  
On the other hand, if $\op{Cld}(X,\gamma)$ is strongly coatomic and 
$x \notin \gamma(\emptyset)$, then there is a closed set $A$ such that $A \prec \gamma(x)$. 
If $(X,\gamma)$ also satisfies the condition, so that 
$|\gamma(x) \setminus A| = 1$, then the only choice for $A$ is 
$\gamma(x) \setminus \{ x \}$.
That makes $\gamma(x) \setminus \{ x \}$ the unique lower cover of $\gamma(x)$, 
so that $\gamma(x)$ is completely join irreducible.
\end{proof}

It is useful to identify a stronger version of spatiality.  
A complete lattice $L$ is said to be \emph{strongly spatial} if whenever $a \nleq b$
in $L$, then there exists an element $p$ that is minimal with respect to
the property that $x \leq a$, $x \nleq b$.  
Clearly any such element must be completely join irreducible,
and thus a strongly spatial lattice is spatial.

Consider the lattice $K$ which is a union of two chains, an infinite chain
$0 < \dots < a_2 < a_1 < 1$ and a 3-element chain $0<b<1$, with no other relations
holding.  Then $K$ is strongly coatomic and spatial, but not strongly spatial.

In the remainder of this section, we identify some conditions under which strongly
coatomic, spatial lattices will be strongly spatial.
The first one is easy.  

A complete lattice $L$ is called \emph{atomistic} if every nonzero $a\in L$ is a join of atoms. 

\begin{lm}
Every atomistic lattice is strongly spatial.
\end{lm}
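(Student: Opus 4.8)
The claim is that every atomistic lattice is strongly spatial. Recall the definition of strongly spatial: whenever $a \nleq b$, there exists an element $p$ minimal with respect to the property that $p \leq a$ and $p \nleq b$. So given $a \nleq b$ in an atomistic lattice $L$, I need to produce such a minimal element. The plan is simply to exhibit an atom below $a$ that is not below $b$.

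First I would use atomicity: since $a \neq 0$ (indeed $a \nleq b$ forces $a \neq 0$), write $a = \Join A$ where $A$ is a set of atoms. If every atom in $A$ were below $b$, then $a = \Join A \leq b$, contradicting $a \nleq b$; so there is an atom $p \in A$ with $p \leq a$ and $p \nleq b$. Second, I would observe that this $p$ is automatically minimal with respect to the property ``$x \leq a$ and $x \nleq b$'': if $x \leq p$ and $x \nleq b$, then in particular $x \neq 0$ (since $0 \leq b$), so $x = p$ because $p$ is an atom. Hence $p$ witnesses strong spatiality for the pair $(a,b)$, and since $(a,b)$ was an arbitrary pair with $a \nleq b$, the lattice is strongly spatial.

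There is essentially no obstacle here; the only thing to be slightly careful about is that the minimality in the definition of strongly spatial is minimality among \emph{all} $x$ with $x \le a$, $x \nleq b$, not just among the chosen generating atoms — but an atom is minimal among all nonzero elements below it, and $0$ fails the condition $x \nleq b$, so this is immediate. One could write the whole argument in two sentences, but I would spell out the atomistic decomposition step explicitly for the reader's convenience.

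\begin{proof}
Suppose $a \nleq b$ in an atomistic lattice $L$; in particular $a \neq 0$.  Write
$a = \Join A$ with $A$ a set of atoms.  If every $q \in A$ satisfied $q \leq b$, then
$a = \Join A \leq b$, a contradiction; so there is an atom $p \in A$ with $p \leq a$ and
$p \nleq b$.  This $p$ is minimal with respect to the property that $x \leq a$ and
$x \nleq b$: if $x \leq p$ and $x \nleq b$, then $x \neq 0$, and since $p$ is an atom
$x = p$.  As $(a,b)$ was arbitrary, $L$ is strongly spatial.
\end{proof}
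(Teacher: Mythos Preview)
Your proof is correct and is exactly the intended argument; the paper itself omits the proof, merely remarking that ``the first one is easy.'' The only content is precisely what you wrote: pick an atom below $a$ not below $b$, and note that an atom is automatically minimal among nonzero elements.
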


A lattice $L$ is called \emph{lower semimodular} if 
$a \prec b$ implies $a \wedge c \preceq b \wedge c$ for all $a$, $b$, $c \in L$.  
Equivalently, a lattice is lower semimodular if $a \prec a \join c$ implies 
$a \wedge c \prec  c$.  

\begin{lm} \label{lsmtoss}
If a complete lattice is strongly coatomic, lower semimodular and spatial, then it
is strongly spatial.
\end{lm}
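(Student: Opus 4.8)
The plan is to rephrase the conclusion as an existence statement about completely join irreducible elements and then produce such an element by a descent controlled by strong coatomicity and lower semimodularity.

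Fix $a \nleq b$ and put $d = a \wedge b$, so $d < a$; for $x \le a$ we have $x \le b$ iff $x \le d$, hence $S := \{x : x \le a,\ x \nleq b\} = [0,a] \setminus [0,d]$. I claim it suffices to produce a completely join irreducible element $p \le a$ with $p \wedge b \prec p$, for then $p$ is a minimal element of $S$. Indeed $p \wedge b \prec p$ forces $p \nleq b$, so $p \in S$; and since $p$ is completely join irreducible, its unique lower cover $p_*$ must equal $p \wedge b$, so by strong coatomicity every $x < p$ satisfies $x \le p_* = p \wedge b \le b$, hence $x \notin S$. (Conversely, by the remark following the definition of a strongly spatial lattice, any minimal element of $S$ is completely join irreducible, and its lower cover, being strictly below it, must be $\le b$ and therefore equal to $p \wedge b$; so nothing is lost.)

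Using strong coatomicity, fix a maximal chain $\mathcal C$ of the interval $[d,a]$ with endpoints $d$ and $a$. By maximality, every element $c$ of $\mathcal C$ other than $d$ is either the supremum of $\{c' \in \mathcal C : c' < c\}$ or else covers this supremum in $L$; moreover every element of $\mathcal C$ above $d$ lies in $S$. Suppose first that $\mathcal C \cap S$ has a least element $t$. Every element of $\mathcal C$ strictly below $t$ is then $\le d$, so $t$ is not the supremum of the part of $\mathcal C$ below it; hence $t$ covers some $t^- \in \mathcal C$ with $t^- \le d$, and $t^- \le t \wedge d < t$ forces $t \wedge d = t^- \prec t$. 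If $t$ is completely join irreducible, take $p = t$. Otherwise $t$ has a lower cover $t'$ distinct from $t \wedge d$; then $(t \wedge d) \vee t' = t$, so the form of lower semimodularity in which $u \prec u \vee v$ implies $u \wedge v \prec v$ gives $t' \wedge d = (t \wedge d) \wedge t' \prec t'$, while $t' \nleq d$ (otherwise $t' \le t \wedge d$, contradicting that $t'$ and $t \wedge d$ are distinct lower covers of $t$). Thus $t' < t$ is again an element of the same kind, and we iterate, forming meets at limit stages.

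The principal obstacle is the limiting behaviour of this construction, and here strong coatomicity and lower semimodularity must be played against one another. First, one must deal with the case in which $\mathcal C \cap S$ has no least element, that is, in which its infimum is $d$; this configuration must be shown not to arise, or else handled separately. Second, along the iterated descent $t = t_0 \succ t_1 \succ t_2 \succ \cdots$ through successive lower covers, the four-element sublattice $\{t_i,\ t_i \wedge d,\ t_{i+1},\ t_{i+1} \wedge d\}$ forced by lower semimodularity yields $t_{i+1} \wedge d \prec t_i \wedge d$, so the elements $t_i \wedge d$ form a strictly descending chain of covers below $d$; the descent must be shown to terminate, equivalently the covering relations $t_i \wedge d \prec t_i$ must be seen to persist at limit ordinals (a proper class of values $t_\alpha$ being impossible). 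Once the construction halts it yields a completely join irreducible $p \le a$ with $p \wedge b \prec p$, which by the first paragraph is a minimal element of $S$; hence $L$ is strongly spatial.
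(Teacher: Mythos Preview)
Your argument is not a proof: you yourself flag the two decisive steps as unresolved. You say the case where $\mathcal C \cap S$ has no least element ``must be shown not to arise, or else handled separately,'' and that the descent $t_0 \succ t_1 \succ \cdots$ ``must be shown to terminate,'' but you do neither. Those are exactly the places where the hypotheses have to do real work. Your observation that $t_{i+1} \wedge d \prec t_i \wedge d$ produces a strictly descending chain of covers below $d$, but strong coatomicity is a condition on \emph{upper} covers and says nothing about infinite descending chains, so this does not force termination. At limit stages you propose to take meets, yet you give no reason why $\bigl(\bigwedge_\alpha t_\alpha\bigr) \wedge d \prec \bigwedge_\alpha t_\alpha$, or even why the meet still lies in $S$.

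There is also a structural problem: you never invoke spatiality. Your descent runs through arbitrary lower covers, and you simply hope to land on a completely join irreducible element; nothing you have written forces that outcome. The paper's argument is far shorter and uses all three hypotheses in one pass. Given $a \nleq b$, strong coatomicity produces $c$ with $a \wedge b \le c \prec a$. Spatiality then hands you, in a single stroke, a completely join irreducible $p$ with $p \le a$ and $p \nleq c$, since $a$ is the join of the completely join irreducibles below it and $a \nleq c$. Finally, from $c \prec a = c \vee p$, one application of lower semimodularity yields $p \wedge c \prec p$, so $p_* = p \wedge c \le c$, and $p$ is minimal with the required property. No maximal chains, no transfinite recursion, no limit ordinals. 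Your first paragraph already isolates the right target---a completely join irreducible $p \le a$ whose unique lower cover is suitably controlled---but the point you are missing is that spatiality produces such a $p$ immediately, rather than at the end of an unbounded descent.
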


\begin{proof}
Let $a \nleq b$ in $L$.  Then $a > a \meet b$, and hence there exists $c$ such that
$a \succ c \geq a \meet b$.  As $L$ is spatial, there is a join irreducible element
$p$ with $p \leq a$, $p \nleq c$.  By lower semimodularity, $p \succ p \meet c$, and
thus $p$ is a minmal element with this property.
\end{proof}

A subset $D$ of a lattice $L$ is \emph{down-directed} if for every pair $d_1$, $d_2 \in D$ 
there exists $d_3 \in D$ such that $d_1 \geq d_3$ and $d_2 \geq d_3$ both hold.
(Note that every chain in $L$ is a down-directed set.)
A complete lattice is \emph{lower continuous} if 
$a \join \Meet D = \Meet_{d \in D} (a \join d)$  
for every down-directed set $D \subseteq L$.
A standard result of lattice theory is
that every dually algebraic lattice is lower continuous.

\begin{lm}
If a complete lattice is strongly coatomic and lower continuous, then it
is strongly spatial.
\end{lm}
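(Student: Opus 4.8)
The plan is to produce the required element $p$ by a Zorn's Lemma argument. Fix $a, b \in L$ with $a \nleq b$, and consider the set $S = \{\, x \in L : x \leq a \text{ and } x \nleq b \,\}$, partially ordered by the restriction of the order of $L$. Since $a \in S$, it is nonempty, and a minimal element of $S$ is exactly an element that is minimal with respect to the property defining strong spatiality. To apply Zorn's Lemma in the form that yields a \emph{minimal} element, I would verify that every nonempty chain $C \subseteq S$ has a lower bound lying in $S$; the natural candidate is $\Meet C$, and since $\Meet C \leq a$ is immediate, the whole argument reduces to the single assertion that $\Meet C \nleq b$.

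The heart of the proof --- and the step I expect to be the main obstacle --- is establishing that assertion, and this is where both hypotheses must be used together. Suppose toward a contradiction that $\Meet C \leq b$. A chain is down-directed, so lower continuity gives $b = b \join \Meet C = \Meet_{c \in C}(b \join c)$; on the other hand, $c \nleq b$ for every $c \in C$, so each $b \join c$ lies strictly above $b$, and $\{\, b \join c : c \in C \,\}$ is a chain of elements strictly above $b$ whose meet is $b$. Now strong coatomicity enters: given $c_{0} \in C$, there is an element $m$ with $b \leq m \prec b \join c_{0}$, and applying lower continuity once more to $m$ together with the down-directed set $\{\, b \join c : c \in C,\ c \leq c_{0} \,\}$ (still of meet $b$) forces $b \join c_{1} \leq m \prec b \join c_{0}$ for some $c_{1} \leq c_{0}$ in $C$. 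Iterating --- transfinitely, taking meets at limit stages --- produces a strictly descending sequence $b \join c_{0} > b \join c_{1} > b \join c_{2} > \cdots$ of elements all strictly above $b$. Such a sequence cannot be longer than the cardinality of $L$, so it must halt, and the crux is to see that it can only halt by contradicting the assumption $\Meet C \leq b$; this is precisely the delicate interplay of ``covers always exist'' (strong coatomicity) and ``meets commute with a fixed join'' (lower continuity).

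Once $\Meet C \nleq b$ is in hand, Zorn's Lemma supplies a minimal element $p$ of $S$. Finally I would record the observation already made in the text: such a $p$ must be completely join irreducible, for if $p = \Join Y$ with $y < p$ for all $y \in Y$, then some $y_{0} \in Y$ satisfies $y_{0} \nleq b$ (otherwise $p = \Join Y \leq b$), and then $y_{0} \leq a$, $y_{0} \nleq b$, $y_{0} < p$ contradicts the minimality of $p$. Hence $L$ is strongly spatial.
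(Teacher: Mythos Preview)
Your central claim---that $\Meet C \nleq b$ for every chain $C$ in $S = \{x \in L : x \leq a,\ x \nleq b\}$---does not follow from the hypotheses, and this is where the argument breaks. Consider the chain $L = (\omega+1)^{d}$, that is, $0 > 1 > 2 > \cdots > \omega$; it is complete, strongly coatomic, and (being a complete chain) lower continuous. With $a = 0$ the top and $b = \omega$ the bottom, one has $S = \{0,1,2,\dots\}$, itself a chain in $S$ whose meet is $\omega = b$. Your transfinite descent does get started---at each successor step one can indeed find $c_{\alpha+1} \in C$ with $b \join c_{\alpha+1} < b \join c_\alpha$---but at the first limit ordinal the meet $\Meet_{\alpha<\lambda}(b \join c_\alpha)$ may simply equal $b$, and the process then halts without contradicting anything. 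The phrase ``it can only halt by contradicting the assumption $\Meet C \leq b$'' is precisely the unproved step.

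The paper sidesteps this by invoking strong coatomicity \emph{before} the Zorn argument rather than inside it. From $a > a \meet b$ it first selects $c$ with $a \meet b \leq c \prec a$, and then applies Zorn to $\mathcal P = \{x \in L : x \leq a,\ x \nleq c\}$. Because $c \prec a$, membership in $\mathcal P$ is equivalent to the single equation $c \join x = a$; lower continuity then gives $c \join \Meet D = \Meet_{d \in D}(c \join d) = a$ for any chain $D \subseteq \mathcal P$, so $\mathcal P$ is closed under meets of chains with no further work. A minimal element $p$ of $\mathcal P$ then satisfies $p \leq a$ and $p \nleq b$, since $p \leq b$ would force $p \leq a \meet b \leq c$. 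The moral: first reduce to the case of a lower cover of $a$, so that ``$x \nleq c$'' becomes a join-equation that lower continuity preserves under meets of chains.
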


\begin{proof}
Again let $a \nleq b$ in $L$, so that $a > a \meet b$, and choose $c$ such that
$a \succ c \geq a \meet b$.  Let $\mathcal P = \{ x \in L : x \leq a, \ x \nleq c \}$,
and note that $x \in \mathcal P$ if and only if $c \join x = a$.   It follows from
lower continuity that if $D$ is a chain in $\mathcal P$, then $\Meet D \in \mathcal P$. 
By Zorn's Lemma, $\mathcal P$ contains a minimal element, which is the desired conclusion.
\end{proof}

\section{Strongly coatomic, strongly spatial, join semidistributive lattices}

Next we generalize some equivalences of join semidistributivity which are
well-known for finite lattices.

The implication
\[ (\op{SD}_{\join}) \qquad
w = x \join y = x \join z  \quad\text{implies}\quad w = x \join (y \meet z)
\]
is known as the \emph{join semidistributive law}.  
In view of the results in J\'onsson and Kiefer \cite{JK62},
we consider the following version of the law:
\[
(\op{SD}_{\join}^*) \qquad 
w = \Join Y = \Join Z \quad\text{implies}\quad w = \Join (y \meet z)
\]
where the sets $Y$, $Z$ are potentially infinite.

For subsets $A$, $B \subseteq L$ we say that $A$ \emph{refines} $B$, denoted $A \ll B$,
if for every $a \in A$ there exists $b \in B$ such that $a \leq b$.  Note that
$A \ll B$ implies $\Join A \leq \Join B$. 

We say that $w = \Join A$ is a \emph{canonical join decomposition} if the join
is irredundant, and $w = \Join B$ implies $A \ll B$.  This implies that
$A \subseteq \op{Ji}(L)$.

\begin{thm}\label{CharSD}
The following are equivalent in a strongly coatomic, strongly spatial lattice $L$.
\begin{enumerate}
\item $L$ satisfies $\op{SD}_\join$.
\item $L$ satisfies $\op{SD}_\join^*$.
\item Every element of $L$ has a canonical join decomposition.
\item If $w \succ c$ in $L$, then there exists a join irreducible $k$ 
which is the unique minimal element such that $k \leq w$ but $k \nleq c$.
\end{enumerate}
\end{thm}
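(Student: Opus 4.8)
The plan is to establish the cycle $(1)\Rightarrow(4)\Rightarrow(3)\Rightarrow(2)\Rightarrow(1)$, since $(2)\Rightarrow(1)$ is trivial (take $Y$, $Z$ to be two-element sets) and $(3)\Rightarrow(2)$ should be a short argument: given $w=\Join Y=\Join Z$ with canonical join decomposition $w=\Join A$, refinement gives $A\ll Y$ and $A\ll Z$, so each $a\in A$ lies below some $y\in Y$ and some $z\in Z$, hence below $y\meet z$; thus $A\ll\{y\meet z: y\in Y, z\in Z\}$ and therefore $w=\Join A\le\Join(y\meet z)\le w$. The real content is the equivalence of $(1)$ and $(4)$ and the passage from $(4)$ to the existence of canonical join decompositions.

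For $(1)\Rightarrow(4)$: suppose $w\succ c$. Since $L$ is strongly spatial and $w\nleq c$, there is an element $p$ minimal with respect to $p\leq w$, $p\nleq c$; such $p$ is completely join irreducible. I would show $p$ is \emph{the} minimal such element. If $q$ is another completely join irreducible with $q\leq w$, $q\nleq c$, minimal with this property, then $p\join c=w=q\join c$ (both joins lie above $w$'s lower cover $c$ and are $\leq w$, hence equal $w$ since $p,q\nleq c$). By $\op{SD}_\join$, $w=c\join(p\meet q)$, so $p\meet q\nleq c$; minimality of $p$ forces $p\meet q=p$, i.e. $p\leq q$, and symmetrically $q\leq p$. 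Hence $p=q$. For $(4)\Rightarrow(1)$: given $w=x\join y=x\join z$, I want $w=x\join(y\meet z)$. If not, then $x\join(y\meet z)<w$, so by strong coatomicity there is $c$ with $x\join(y\meet z)\leq c\prec w$. Then $x\leq c$ but $y\nleq c$ and $z\nleq c$ (else $w\leq c$). Using strong spatiality pick a completely join irreducible $p\leq y$ minimal with $p\nleq c$, and similarly $p'\leq z$ minimal with $p'\nleq c$. Both are minimal elements below $w$ not below $c$; by $(4)$ this minimal element is unique, so $p=p'\leq y\meet z\leq c$, a contradiction.

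For $(4)\Rightarrow(3)$: fix $w\in L$. For each cover $c\prec w$ (these exist in abundance by strong coatomicity, and $w$ is the join of the $k_c$ below) let $k_c$ be the unique minimal join irreducible with $k_c\leq w$, $k_c\nleq c$, furnished by $(4)$. Set $A=\{k_c: c\prec w\}$. First, $\Join A=w$: since $L$ is strongly spatial hence spatial, $w=\Join\{p\in\op{Ji}(L): p\leq w\}$, and for each such $p$ with $p\nleq$ some lower cover we can slot it below a suitable $k_c$; more carefully, if $\Join A=w'<w$ choose $c'$ with $w'\leq c'\prec w$, then $k_{c'}\in A$ but $k_{c'}\nleq c'\geq\Join A\geq k_{c'}$, a contradiction. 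Irredundancy: if $k_{c}\leq\Join(A\setminus\{k_c\})\leq c$ we contradict $k_c\nleq c$ (using that every other $k_{c'}$ satisfies $k_{c'}\leq w$ and $k_{c'}\leq c$ whenever $c'\neq c$? — this needs care). Finally, if $w=\Join B$, then for each $c\prec w$ some $b\in B$ has $b\nleq c$, and $b\leq w$; minimality of $k_c$ gives $k_c\leq b$, proving $A\ll B$.

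The main obstacle I expect is the bookkeeping in $(4)\Rightarrow(3)$, specifically proving irredundancy of $A=\{k_c : c\prec w\}$ and verifying $\Join A=w$ when $w$ has no lower covers at all or infinitely many. The clean fact one wants is: for distinct lower covers $c\neq c'$ of $w$, one has $k_{c'}\leq c$ — this holds because $k_{c'}\wedge$ is small and $c\join k_{c'}$ either equals $w$ (impossible, as then $c'$ and $c$ give two covers but $k_{c'}\leq c$ would follow from minimality applied with $c$ in place of $c'$... ) — so the argument must instead run: $k_{c'}\leq w$ and if $k_{c'}\nleq c$ then $k_{c'}$ is a minimal join irreducible below $w$ not below $c$, whence by uniqueness $k_{c'}=k_c$. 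Thus distinct indices can still give equal elements, and $A$ as a \emph{set} is automatically irredundant because removing $k_c$ drops everything below some $c$. Handling the case where $w$ is completely meet irreducible with no lower cover, or is the bottom element, should be treated as trivial base cases ($A=\{w\}$ or $A=\emptyset$).
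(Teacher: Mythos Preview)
Your plan matches the paper's proof: the same cycle $(1)\Rightarrow(4)\Rightarrow(3)\Rightarrow(2)\Rightarrow(1)$, with essentially identical arguments for $(1)\Rightarrow(4)$, $(3)\Rightarrow(2)$, $(2)\Rightarrow(1)$, and the $\Join A=w$ and canonicity parts of $(4)\Rightarrow(3)$. Your extra direct argument for $(4)\Rightarrow(1)$ is correct but redundant given the cycle.

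The one genuine gap is exactly where you flagged it: irredundancy of $A=\{k_c:c\prec w\}$. Your attempted fix --- ``if $k_{c'}\nleq c$ then $k_{c'}$ is a minimal join irreducible below $w$ not below $c$, whence $k_{c'}=k_c$'' --- does not go through: $k_{c'}$ is minimal for the pair $(w,c')$, meaning $(k_{c'})_*\leq c'$, but nothing forces $(k_{c'})_*\leq c$, so you have not shown $k_{c'}$ is minimal for $(w,c)$. The paper handles this differently. For distinct lower covers $c\neq d$ of $w$ one has $d\nleq c$, so \emph{strong spatiality applied to $d$ and $c$} yields a $p$ minimal with $p\leq d$, $p\nleq c$. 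Any $q<p$ with $q\leq w$, $q\nleq c$ would satisfy $q<p\leq d$ and violate minimality of $p$; hence $p$ is also minimal for $(w,c)$, and uniqueness in (4) gives $p=k_c$. Thus $k_c\leq d$ for every lower cover $d\neq c$, so $\Join_{c'\neq d}k_{c'}\leq d<w$, which is irredundancy. Incidentally this shows $c\mapsto k_c$ is injective (if $k_c=k_d$ then $k_d=k_c\leq d$, contradiction), so your worry about distinct indices giving equal elements does not arise. Your concern about $w$ having no lower cover is also moot: strong coatomicity guarantees every $w>0$ has one, and $w=0$ has the empty canonical decomposition.
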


\begin{proof}
To see that $(1) \Rightarrow (4)$, note that the strongly spatial property says
that there is at least one such element $k$.  If there were two or more, say $k_1$ and $k_2$,
then $c \join k_1 = w= c \join k_2 > c \join (k_1 \meet k_2)$ by the minimality of each $k_i$, 
contradicting $(\op{SD}_{\join})$.  Hence such an element $k$ is unique.

$(4) \Rightarrow (3)$.
Assume that property (4) holds, and fix an element $w \in L$.  
Let $C = \{ c \in L : w \succ c \}$ be the set of lower covers of $w$.  
For each $c \in C$, we can find a minimal element
$k_c$ such that $k_c \leq w$ but $k_c \nleq c$.  We claim that $w = \Join_{c \in C} k_c$
canonically.  Clearly $\Join_{c \in C} k_c=w$, since each $k_c$ is below $w$,
while the join is below no lower cover of $w$.  

If $c \ne d \in C$, then $d \nleq c$.  By strong spatiality, there exists
a minimal element $p$ such that $p \leq d$, $p \nleq c$.  
As $k_c$ is assumed to be the unique minimal element below $w$ but not $c$, 
we must have $p=k_c$, whence $k_c \leq d$.  
Thus $\Join_{c \ne d} k_c \leq d$.
It follows that the join representation
$w = \Join_{c \in C} k_c$ is irredundant.

Suppose $w = \Join A$ and consider $c \prec w$.  There exists some $a_0 \in A$
such that $a_0 \nleq c$, though $a_0 \leq w$, whereupon $k_c \leq a_0$.
Since this holds for all $c \in C$, we have $\{ k_c : c \in C \} \ll A$, as desired.

$(3) \Rightarrow (2)$.
Suppose that $w = \Join S = \Join T$.  If there is a canonical join decomposition
$w = \Join U$ in $L$, then $U$ refines both $S$ and $T$, so that for each
$u \in U$ there exist $s \in S$ with $u \leq s$, 
and $t \in T$ with $u \leq t$.  Hence each $u \leq s \meet t$
for some pair, and it follows that $w = \Join (s \meet t)$.

$(2) \Rightarrow (1)$ clearly, as $\op{SD}_\join$ is a special case of 
$\op{SD}_\join^*$.
\end{proof}

\section{Characterizations of strongly coatomic, spatial convex geometries}

First, we introduce an idea with a geometric flavor.

For any $A\subseteq X$, $x \in A$ is called an \emph{extreme point} of $A$ if
$x \notin \gamma(A\setminus \{x\})$. The set of extreme points of $A$ is denoted 
$\op{Ex}(A)$.
In lattice terms, for a strongly spatial lattice $L$, we identify the element $a$ with the set 
$\op{Ji}(a) = \{ p \in \op{Ji}(L) : p \leq a \}$.
Then $x \in \op{Ji}(a)$ is an extreme point of $a$ if 
$a > \Join (\op{Ji}(a) \setminus \{ x \})$.
This means that (i) $x$ is completely join prime in the ideal $\op{id}(a)$, and 
(ii) there is no other join irreducible $y$ with $x < y \leq a$.

We now extend some characterizations of finite convex geometries to strongly coatomic,
spatial geometries.
In this setting we want to think of a lattice in terms of its standard
representation as a closure system on its set of join irreducibles.
Note that properties (1)--(2) of the next theorem are about closure systems, 
while (3) and (5)--(6) are lattice properties; (4) is more geometric in nature.
For the finite case, various parts of the 
theorem are due to S.P.~Avann, P.~Edelman and R.~Jamison, and V.~Duquenne.

\begin{thm}\label{SCSgeom}
Let $L$ be an strongly coatomic, spatial lattice. Then the following are equivalent.
\begin{enumerate}
\item $L$ is the closure lattice $\op{Cld} (X, \gamma)$ of a closure
system $(X,\gamma)$ with the AEP.
\item $L$ is the closure lattice $\op{Cld} (X, \gamma)$ of a closure
system $(X,\gamma)$ with the property that 
if $A\prec B$ in $\op{Cld} (X, \gamma)$, then $|B\setminus A|=1$.
\item $L$ is join semidistributive and lower semimodular.
\item Every element $w \in L$ is the join of\/ $\op{Ex}(w)$.  
\item Every element $w \in L$ has a unique irredundant join decomposition
into join irreducible elements, which is canonical, i.e., it refines every
other join representation of $w$.
\item For every pair of elements $w$, $c \in L$ with 
$c \prec w$, there exists a unique $j$ in $\op{Ji}(L)$
such that $j \leq w$ and $j \nleq c$.
\end{enumerate}
\end{thm}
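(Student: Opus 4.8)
The plan is to prove Theorem~\ref{SCSgeom} by establishing a cycle of implications, using the earlier results as black boxes. The most efficient route seems to be $(1)\Leftrightarrow(2)$ directly from Theorem~\ref{covCG}, then $(2)\Rightarrow(6)$, then $(6)\Leftrightarrow$ the conditions of Theorem~\ref{CharSD}, then back to $(2)$, and separately tie in $(4)$ and $(5)$.

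First I would dispatch $(1)\Leftrightarrow(2)$: since $L$ is strongly coatomic, any closure system representing it is strongly coatomic, so Theorem~\ref{covCG} applies verbatim. For $(2)\Rightarrow(3)$, I would argue lower semimodularity first: if $A\prec A\join C$ in $\op{Cld}(X,\gamma)$, then $|(A\join C)\setminus A|=1$, say the new point is $x$; I'd show $A\meet C\prec C$ by checking that $C$ is a lower cover of $A\join C$ restricted appropriately — more carefully, strong coatomicity gives a cover $D$ with $A\meet C\leq D\prec C$, and I'd chase the single-point-difference property up the covers to force $D=A\meet C$. For join semidistributivity under hypothesis $(2)$, I would invoke Theorem~\ref{CharSD}: condition $(2)$ here gives that every cover $c\prec w$ in $L$ adds exactly one join irreducible (using the Lemma identifying $X\setminus\gamma(\emptyset)$ with the completely join irreducibles), which is precisely condition $(4)$ of Theorem~\ref{CharSD}, hence $\op{SD}_\join$. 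Note we need $L$ strongly spatial to apply Theorem~\ref{CharSD}; here is where I expect the main obstacle — but condition $(2)$ together with the second Lemma of Section~3 shows every join irreducible closed set has a unique lower cover obtained by deleting one point, and one checks directly that given $a\nleq b$ the point-counting forces a minimal join irreducible below $a$ not below $b$, so $L$ is strongly spatial as soon as it satisfies $(2)$ (or equivalently $(6)$). I would isolate this as the crux of the argument.

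Next, $(3)\Rightarrow(6)$: assuming $\op{SD}_\join$ and lower semimodularity, Lemma~\ref{lsmtoss} gives strong spatiality, so Theorem~\ref{CharSD}$(1)\Rightarrow(4)$ yields exactly statement $(6)$. Conversely $(6)\Rightarrow(2)$: take the standard representation of $L$ on $X=\op{Ji}(L)$; if $A\prec B$ are closed sets corresponding to $c\prec w$, then by $(6)$ there is a unique $j\in\op{Ji}(L)$ with $j\leq w$, $j\nleq c$, and I would show $B\setminus A=\{j\}$, i.e.\ every other join irreducible below $w$ already lies below $c$ — if some $p\leq w$, $p\nleq c$ with $p\neq j$ existed, minimality considerations plus uniqueness give a contradiction. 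This closes the loop $(1)\Leftrightarrow(2)\Leftrightarrow(3)\Leftrightarrow(6)$.

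Finally I would connect $(4)$ and $(5)$. For $(6)\Rightarrow(4)$: fix $w$ and let $C$ be its set of lower covers; by $(6)$ each $c\in C$ yields a unique $k_c\in\op{Ji}(L)$ with $k_c\leq w$, $k_c\nleq c$, and I would verify that $\{k_c:c\in C\}$ is exactly $\op{Ex}(w)$ — each $k_c$ is an extreme point since it is the unique join irreducible below $w$ escaping $c$ (giving clauses (i) and (ii) in the extreme-point description), and conversely any extreme point $x$ of $w$ escapes some lower cover and hence equals the corresponding $k_c$ — and that $w=\Join\op{Ex}(w)$ because the join of the $k_c$ lies below no lower cover of $w$; this mirrors the $(4)\Rightarrow(3)$ argument in Theorem~\ref{CharSD}. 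For the equivalence with $(5)$: statement $(5)$ is the assertion that $w$ has a canonical join decomposition into join irreducibles, which by Theorem~\ref{CharSD}$(3)\Leftrightarrow(4)$ is equivalent to $(6)$ once strong spatiality is in hand, and the unique irredundant decomposition is precisely $\Join\op{Ex}(w)=\Join_{c\in C}k_c$. The one delicate point throughout is making sure strong spatiality is available before citing Theorem~\ref{CharSD}; I would handle this once and for all by showing near the start that each of $(2)$ and $(6)$ implies strong spatiality, and that $(3)$ implies it via Lemma~\ref{lsmtoss}.
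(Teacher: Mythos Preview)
Your overall scheme matches the paper's: $(1)\Leftrightarrow(2)$ by Theorem~\ref{covCG}, the main cycle $(2)\Rightarrow(3)\Rightarrow(6)\Rightarrow(2)$, then attaching $(4)$ and $(5)$. One minor slip: in $(3)\Rightarrow(6)$ you cite Theorem~\ref{CharSD}$(1)\Rightarrow(4)$, but that only produces a unique \emph{minimal} $k$; condition~(6) demands the unique element of $\op{Ji}(L)$ with $k\le w$, $k\nleq c$. The paper fills this with lower semimodularity: any join irreducible $j$ with $j\le w$, $j\nleq c$ satisfies $j\succ j\meet c$, hence $j_*\le c$, so $j$ is already minimal, and then $\op{SD}_\join$ forces uniqueness. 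You should insert this step.

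The real gap is your treatment of $(5)$. You identify $(5)$ with condition~(3) of Theorem~\ref{CharSD} and $(6)$ with its condition~(4), then invoke their equivalence. But $(5)$ is strictly stronger than CharSD(3)---it additionally asserts that the irredundant decomposition into join irreducibles is \emph{unique}---and $(6)$ is strictly stronger than CharSD(4), as just noted. So Theorem~\ref{CharSD} by itself does not deliver $(5)\Leftrightarrow(6)$. The paper instead proves a chain $(6)\Rightarrow(4)\Rightarrow(5)\Rightarrow(6)$ by direct arguments, and $(5)\Rightarrow(6)$ is the subtlest: with $w=\Join K$ the unique irredundant canonical decomposition and $c\prec w$, one picks $k_0\in K$ with $k_0\nleq c$, shows that $p\nleq c$ iff $k_0\le p$ for every $p\le w$, and then for any join irreducible $p$ with $k_0\le p\le w$ constructs a second irredundant decomposition $w=p\vee\Join\{k\in K:k\nleq p\}$, forcing $p=k_0$ by the uniqueness hypothesis in $(5)$. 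Your shortcut skips this argument entirely; it also leaves the implication from $(4)$ back into the cycle unaddressed, and running $(5)\Rightarrow(6)$ through Theorem~\ref{CharSD} would in any case require strong spatiality from $(5)$ alone, which you do not establish.
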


Note that we need only assume that $L$ is spatial, but that strong spatiality
will follow, for example from (3) and Lemma~\ref{lsmtoss}.

\begin{proof}
The equivalence of (1) and (2) is Theorem \ref{covCG}.

To see that (2) $\Rightarrow$ (3), consider a closure system $(X,\gamma)$ 
satisfying (2).  Clearly (2) implies that $\op{Cld}(X,\gamma)$ is 
lower semidmodular; we want to show that it is join semidistributive.
This can be done by proving that every closed set $B$ has a canonical join
decomposition in $\op{Cld}(X,\gamma)$.  (Note that (3) $\Rightarrow$ (2) 
$\Rightarrow$ (1) of Theorem \ref{CharSD} holds in all complete lattices.)

For each $A \prec B$ in $\op{Cld}(X,\gamma)$, let $\{ x_A \} = B \setminus A$.
The claim is that $B = \Join_{A \prec B} \gamma(x_A)$ canonically.
Let $R$ denote the right hand side, and note that $B \supseteq R$.
If $B \supset R$ properly, then there would exist $C$ such that 
$R \leq C \prec B$.  That would imply $x_C \in R \subseteq C$, a contradiction.
Thus $B=R$.  Moreover, the join is irredundant as $x_A \in C$ for any pair
of distinct lower covers $A$, $C \prec B$.

Now suppose $B = \Join_{i \in I} D_i$ for some closed sets $D_i$ in $\op{Cld}(X,\gamma)$.
For each $A \prec B$, we have $\bigcup D_i \not\subseteq A$, so that there exists
an $i_0$ with $D_{i_0} \not\subseteq A$.  Since $D_{i_0} \subseteq B$, this implies
$x_A \in D_{i_0}$, whence $\gamma(x_A) \leq D_{i_0}$. 
We have shown that $\{ \gamma(x_A) : A \prec B \} \ll \{ D_i : i \in I \}$,
as required for a canonical join decomposition.
This proves (3).

Properties (4)--(6) are variations on a theme.  Let us work with (6) for now,
returning to their equivalence later.

Assume (3), that $L$ is join semidistributive and lower semimodular,
and we want to show that (6) holds.
Let $w \succ c$ in $L$.  The assumption that $L$ is spatial means that there 
is a completely join irreducible element $j$ such that $j \leq w$ but $j \nleq c$.  
By lower semimodularity, $j \succ j \meet c$, i.e., $j_* \leq c$,
so $j$ is minimal with this property.
By $\op{SD}_\join$, there can be only one such minimal element.  Thus
$j$ is the unique minimal element with $j \leq w$, $j \nleq c$.  

Note that (6) implies (2) immediately for the standard representation of a 
apatial lattice as a closure system on its set of join irreducible elements.
Indeed, (6) says that if $c \prec w$ in $L$, then 
$\op{Ji}(w) \setminus \op{Ji}(c) = \{ j \}$.

We turn to the equivalence of conditions (4)--(6).  
Let $w \in L$, and assume (6), that for each covering pair $c \prec w$ 
there is a unique $j_c$ in $\op{Ji}(L)$ such that $j_c \leq w$ and $j_c \nleq c$.
Then $\Join \{ j_c: c \prec w \} = w$, since the join is below $w$
but not below any of its lower covers.  On the other hand, each such
$j_c$ is extreme, as $c = \Join (\op{Ji}(w) \setminus \{ j_c \})$,
because $j_c$ is the only join irreducible below $w$ that not below $c$.
Thus (4) holds.

Next assume (4), and let us show that $w = \Join \op{Ex}(w)$ is the unique
irredundant decomposition of an element $w \in L$ into join irreducibles,
and that this decomposition refines any other decomposition.
Suppose $w = \Join B$ is another such decomposition.  Because each 
$j \in \op{Ex}(w)$ is completely join prime in the ideal $\op{id}(w)$,
we have $j \leq b$ for some $b \in B$.  As there are no other join irreducibles
in the interval $[j,w]$, this implies $j=b$.  Thus $\op{Ex}(w) \subseteq B$,
and since both are irredundant decompositions, $\op{Ex}(w) = B$.
This proves (5).

Finally, assume (5), that each element of $L$ has a unique irredundant join 
decomposition into join irreducible elements, which is canonical.
Let $w \in L$ with $w = \Join K$ its canonical decomposition, and let $c \prec w$.
There exists a $k_0 \in K$ such that $k_0 \nleq c$.  
We want to show that $k_0$ is the only join irreducible element of $L$ that is
below $w$ but not below $c$.

Note that for any element $p \leq w$, we have 
$p \nleq c$ if and only if $k_0 \leq p$.
Clearly $k_0 \leq p$ implies $p \nleq c$.  Conversely, if $p \nleq c$ then
$w = p \join c$, and since $w=\Join K$ canonically, $K \ll \{ p,c \}$.  
In particular, $k_0 \leq p$.
This also implies that $k \leq c$ for all $k \in K \setminus \{ k_0 \}$.

Now let $p \in \op{Ji}(L)$ with $k_0 \leq p \leq w$.  
Let $K' = \{ k \in K : k \nleq p \}$.  
We claim that $w = p \join \Join K'$ is an irredundant decomposition of $w$
into join irreducibles.
As noted above, $\Join K' \leq c$.  Consider any $k_1 \in K'$.  There is a lower
cover $d \prec w$ such that $k_1 \nleq d$.  By the argument above applied to $k_1$
we have $p \join \Join (K' \setminus \{ k_1 \}) \leq d$.  Thus the decomposition
is irredundant.  As $w = \Join K$ is the unique such decomposition, it must be that
$p=k_0$.  Thus (5) holds, completing the proof.

(When $L$ is finite, it is not necessary to assume that the decomposition
in (6) is canonical.  This will follow from uniqueness, since every join
representation of an element refines to an irredundant one in a finite lattice.)
\end{proof}

Conspicuously missing from the list of equivalences of Theorem~\ref{SCSgeom}
is an analogue of Dilworth's characterization of finite convex geometries
as lattices that are locally distributive.
A strongly coatomic complete lattice is said to be \emph{locally distributive} 
(or \emph{lower locally distributive} or \emph{meet distributive}) 
if for any $x \in L$ the interval $[\mu(x), x]$, where
$\mu(x)= \bigwedge \{ y :\, y \prec x \}$, is a distributive lattice 
(and hence for finite lattices a boolean algebra). 

The property \emph{$L$ is locally distributive} is missing from the list because
it is not equivalent in this setting.
Consider the lattice $(\omega +1)^d \times \mathbf 2$, with its atom doubled. 
This lattice is strongly coatomic, strongly spatial, locally distributive,
and even join semidistributive, but it is not lower semimodular.

On the other hand, it is straightforward to show that property (2) of
Theorem~\ref{SCSgeom} implies local distributivity in any closure system.
One solution is to further restrict the class of lattices under consideration.
For lattices that are also lower continuous, M. Semenova has shown that local
distributivity is also equivalent.

\begin{thm}\label{T:CharCS} \cite{MS00}
Let $L$ be a strongly coatomic, lower continuous, complete lattice. 
Then the conditions of Theorem \ref{SCSgeom} are equivalent to
\begin{enumerate}
\item[(7)] $L$ is locally distributive.
\end{enumerate}
\end{thm}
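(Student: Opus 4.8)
The plan is to prove the two implications $(7)\Rightarrow$(one of the conditions of Theorem~\ref{SCSgeom}) and (one of those conditions)$\Rightarrow(7)$; since conditions (1)--(6) of Theorem~\ref{SCSgeom} are already equivalent for a strongly coatomic, spatial lattice, this establishes the theorem. Only the first implication uses the extra hypothesis of lower continuity.

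For $(2)\Rightarrow(7)$ no new idea is needed beyond the remark made just before the theorem. Given a closure system in which $A\prec B$ forces $|B\setminus A|=1$, fix a closed set $C$, let $\Gamma$ be the set of its lower covers, and for $A\in\Gamma$ let $\{x_A\}=C\setminus A$. The assignment $E\mapsto\{A\in\Gamma:E\subseteq A\}$ is an order-reversing map $[\mu(C),C]\to\mathcal P(\Gamma)$ taking meets to unions and joins to intersections, and it is injective because strong coatomicity together with $|B\setminus A|=1$ forces $E=\bigcap\{A\in\Gamma:E\subseteq A\}$. Hence $[\mu(C),C]$ is distributive, so $(7)$ holds; since all conditions of Theorem~\ref{SCSgeom} are equivalent, each of them implies $(7)$.

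For the converse I would assume $L$ strongly coatomic, lower continuous, and locally distributive, and verify condition $(6)$ of Theorem~\ref{SCSgeom}: for each covering pair $c\prec w$ there is a unique $j\in\op{Ji}(L)$ with $j\le w$ and $j\nleq c$. First, $L$ is strongly spatial, by the last lemma of Section~3; indeed $\{z\le w:z\nleq c\}$ is closed under down-directed meets precisely because $c\prec w$ makes $c\join z=w$ for every $z$ in it, so Zorn's Lemma yields a minimal such $z$, which is completely join irreducible --- that is existence. For uniqueness, suppose $m_1\ne m_2$ are both minimal with $m_i\le w$, $m_i\nleq c$. Then $m_1\meet m_2\le c$ by minimality, and since each $m_i$ is completely join irreducible with unique lower cover $(m_i)_*<m_i\le w$, one has $(m_i)_*\le c$, hence $m_i\meet c=(m_i)_*$ and $m_i\join c=w$. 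So it remains to rule out a covering pair $c\prec w$ with $w=m_1\join c=m_2\join c$ and $(m_1\meet m_2)\join c=c$, i.e.\ to prove the relevant instance of $\op{SD}_\join$.

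This last step is the crux, and the one place where lower continuity is indispensable. The idea is to descend from $c$ through covers, passing to meets at limit stages --- lower continuity being what controls the limits --- until one reaches an element $z$ all of whose lower covers lie below $m_1$; then $\mu(z)\le m_1$, the interval $[\mu(z),z]$ is distributive, $m_1\meet z$ is a proper element of it, and, using also $m_2\join c=w$ and $m_1\meet m_2\le c$, one is forced into a contradiction inside that distributive interval. The obstacle is exactly the control of these limit stages: local distributivity by itself governs only the intervals $[\mu(x),x]$, and without lower continuity a second minimal element can lurk below $\mu(w)$, invisible there --- which is what happens in $(\omega+1)^d\times\mathbf 2$ with a doubled atom, a strongly coatomic, spatial, locally distributive lattice that fails to be lower semimodular. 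Once condition $(6)$ is in hand, Theorem~\ref{SCSgeom} delivers the rest.
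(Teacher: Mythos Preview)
The paper does not prove this theorem: the forward direction is called ``straightforward'' in the paragraph preceding the statement, and the converse is attributed without proof to Semenova~\cite{MS00}. So there is no proof in the paper to compare against; I can only assess whether your argument stands on its own.

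Your argument for $(2)\Rightarrow(7)$ is complete and correct: the map $E\mapsto\{A\in\Gamma:E\subseteq A\}$ is a dual lattice embedding of $[\mu(C),C]$ into $\mathcal P(\Gamma)$ (the verification that meets go to unions uses $|C\setminus A|=1$, and injectivity uses that any $y\in C\setminus E$ with $E\ge\mu(C)$ satisfies $y=x_A$ for some $A\in\Gamma$ with $E\subseteq A$). This fleshes out exactly what the paper leaves implicit.

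Your treatment of $(7)\Rightarrow(6)$, however, is not a proof. Existence of a minimal $m\le w$ with $m\nleq c$ is handled correctly via lower continuity and Zorn, as in the last lemma of Section~3. But the uniqueness argument --- which you rightly identify as the crux, and which is the actual content of Semenova's theorem --- is only gestured at. You propose to ``descend from $c$ through covers \dots\ until one reaches an element $z$ all of whose lower covers lie below $m_1$,'' yet you do not say \emph{which} lower cover to pass to at successor stages, what invariant the descent preserves (must $z\nleq m_1$ throughout? must $m_2\vee z$ remain large?), nor why the process terminates at a $z$ with $z\nleq m_1$ rather than sinking below $m_1$ or to $0$. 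Most seriously, the last sentence --- ``using also $m_2\join c=w$ and $m_1\meet m_2\le c$, one is forced into a contradiction inside that distributive interval'' --- is asserted but not carried out: once $z<c$, the relations $m_2\join c=w$ and $m_1\meet m_2\le c$ say nothing about $[\mu(z),z]$, and you give no mechanism for transporting $m_2$ (or a suitable surrogate) into that interval so that distributivity there yields the needed failure of $\op{SD}_\join$. The example you cite shows that precisely this transport is where lower continuity must do nontrivial work; you have identified the right ingredients but have not assembled them into an argument.
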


However, it may be that we need only add the hypothesis of lower semimodularity
to complete the equivalences.

\begin{prob}
Does every strongly coatomic, locally distributive, lower semimodular,
complete lattice satisfy the properties of Theorem~\ref{SCSgeom}?
\end{prob}

It would suffice, for example, to show join semidistributivity.
But so far, a solution eludes us.

Atomistic convex geometries were characterized in Proposition~3.1 of Adaricheva,
Gorbunov and Tumanov \cite{AGT03}.  For the strongly coatomic case, the proof is 
particularly easy.

\begin{cor}\label{C:JSDfCG}
Any atomistic, strongly coatomic, join semidistributive
lattice is the closure lattice of some convex geometry.
\end{cor}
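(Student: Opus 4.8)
The plan is to show that $L$ satisfies condition~(6) of Theorem~\ref{SCSgeom} and then apply the implication $(6)\Rightarrow(1)$ of that theorem. Since $L$ is atomistic it is strongly spatial (every atomistic lattice is strongly spatial), hence in particular spatial, so Theorem~\ref{SCSgeom} applies to the strongly coatomic lattice $L$.

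The key preliminary observation is that in an atomistic, strongly coatomic lattice the set $\op{Ji}(L)$ is exactly the set of atoms: atoms are trivially join irreducible, and conversely a nonzero join irreducible element $j$ is completely join irreducible by Lemma~\ref{basicJI}, so its expression $j=\Join\{a: a \text{ an atom}, \ a\le j\}$ as a join of atoms (a nonempty set, since $j\ne 0$) cannot be a join of elements all strictly below $j$, which forces $j$ itself to be an atom. Granting this, let $c\prec w$ in $L$. Some atom lies below $w$ but not below $c$, since otherwise $w=\Join\{a: a\text{ an atom}, \ a\le w\}\le c$; and if $p\ne q$ were two atoms below $w$ but not below $c$, then $c\prec w$ gives $c\join p=w=c\join q$, so $\op{SD}_\join$ forces $w=c\join(p\meet q)=c\join 0=c$, a contradiction. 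Hence there is a unique $j\in\op{Ji}(L)$ with $j\le w$ and $j\nleq c$, which is exactly condition~(6).

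By Theorem~\ref{SCSgeom}, $L$ is then the closure lattice of a closure system with the AEP; explicitly one may take the standard representation on $X=\op{Ji}(L)$ with $\gamma(A)=\op{Ji}(\Join A)$, and here $\gamma(\emptyset)=\op{Ji}(0)=\emptyset$ because join irreducibles are nonzero. Thus $(X,\gamma)$ is a zero-closure system with the AEP, i.e.\ a convex geometry, whose closure lattice is $L$. (Alternatively one can bypass Theorem~\ref{SCSgeom} and check the AEP directly for this $(X,\gamma)$: if $u$ is closed and $x\ne y$ are atoms not below $u$ with $x\le u\join y$ and $y\le u\join x$, then $u\join x=u\join y$, so $\op{SD}_\join$ gives $u\join x=u\join(x\meet y)=u$, contradicting $x\nleq u$.) No serious obstacle arises here: the argument rests entirely on the facts that the join irreducibles of an atomistic lattice are its atoms and that distinct atoms meet to $0$, which is precisely what lets $\op{SD}_\join$ deliver the anti-exchange property.
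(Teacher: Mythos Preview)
Your proof is correct and follows essentially the same approach as the paper: both arguments show that for $c\prec w$ there is a unique atom below $w$ but not below $c$, using $\op{SD}_\join$ applied to $c\join t_1=c\join t_2=w$ to rule out two such atoms, and then invoke Theorem~\ref{SCSgeom}. You are somewhat more explicit than the paper in justifying that $\op{Ji}(L)$ consists exactly of the atoms and in verifying the zero-closure condition, but the core idea is identical.
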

\begin{proof} 
Let $L$ be a lattice with these properties, and let $c \prec w$ in $L$.
By atomicity, there is an atom $t$ such that $t \leq w$ and $t \nleq c$.
There cannot be two distinct such atoms, say $t_1$ and $t_2$, else
$c \join t_1 = c \join t_2 = w > c = c \join (t_1 \meet t_2)$, 
contradicting join semidistributivity.  
Hence $\op{Ji}(w) \setminus \op{Ji}(c) = \{ t \}$, so that condition~(2) 
of Theorem~\ref{SCSgeom} holds.
\end{proof}

\section{Examples of strongly coatomic convex geometries}

There are natural examples of the kind of geometries described in 
Corollary \ref{C:JSDfCG}, obtained by taking standard convex geometries and 
adding finiteness conditions to ensure strong coatomicity.  
Recall that every chain in an ordered set $P$ is finite if and only if $P$ 
satisfies both the ACC and DCC.

\begin{thm}
\emph{(1)}   If $P$ is an ordered set in which every chain is finite,
then the lattice of convex subsets $\op{Co}(P)$ is an atomistic, strongly
coatomic convex geometry.

\emph{(2)}  If $S$ is a meet semilatiice that satisfies the ACC,
then the lattice of subsemilattices $\op{Sub}_\meet (S)$ is an atomistic, strongly
coatomic convex geometry.
\end{thm}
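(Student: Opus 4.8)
The plan is to prove both parts by a single template. In each case the lattice in question is the closure lattice of a closure system on an obvious ground set ($P$, respectively $S$), so it is automatically a complete lattice; I then check three things: (i) the closure system is a convex geometry, i.e.\ zero-closure and AEP; (ii) the lattice is atomistic; and (iii) the lattice is strongly coatomic. For (i), the closure operators are the two examples noted above, namely the convex hull operator on an ordered set and the subalgebra operator on a semilattice, both of which are easily seen to be zero-closure and to satisfy the AEP. For (ii), in each case the singletons $\{p\}$ are closed, they are the atoms, and any closed set $T$ equals the lattice join $\Join_{p\in T}\{p\}=\gamma(T)=T$, so the lattice is atomistic. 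The crux is (iii), which I reduce to the following claim: for every proper inclusion $A\subsetneq C$ of closed sets there is an extreme point $e\in\op{Ex}(C)$ with $e\notin A$. Granting the claim, $\gamma(C\setminus\{e\})\subseteq\gamma(C)=C$ and $e\notin\gamma(C\setminus\{e\})$ force $C\setminus\{e\}$ to be closed; it contains $A$; and any closed set between $C\setminus\{e\}$ and $C$ is one of these two, so $C\setminus\{e\}\prec C$. This produces a $B$ with $A\le B\prec C$, which is exactly strong coatomicity.

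For part (1), fix $A\subsetneq C$ in $\op{Co}(P)$ and pick $x\in C\setminus A$. Convexity of $A$ forbids the existence of $a,b\in A$ with $a<x<b$, so either no element of $A$ lies strictly below $x$, or none lies strictly above $x$. In the first case I descend inside $C$ from $x$: since $x$ is not a minimal element of $C$ there is $x_1\in C$ with $x_1<x$, and repeating, the DCC forces this to terminate at some $b\in C$ minimal in $C$, hence $b\in\op{Ex}(C)$; if $b=x$ then $x$ itself is extreme, and otherwise $b<x$ puts $b\notin A$. The second case is symmetric, ascending from $x$ and using the ACC to reach a maximal, hence extreme, element of $C$ outside $A$. (Recall "every chain finite" means ACC together with DCC, and both are used here.) Either way the claim holds.

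For part (2), fix $A\subsetneq C$ in $\op{Sub}_\meet(S)$ and use the ACC to choose $x$ maximal in the nonempty set $C\setminus A$. Suppose $x$ were not extreme in $C$; then $x=c_1\meet\dots\meet c_n$ for some $c_i\in C$ with each $c_i\neq x$, hence $c_i>x$. By maximality of $x$ in $C\setminus A$, each such $c_i$ must lie in $A$, and then $x=\Meet c_i\in A$ because $A$ is a subsemilattice --- contradicting $x\notin A$. Hence $x\in\op{Ex}(C)$ and $x\notin A$, which is the claim.

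The genuinely delicate step is the strong coatomicity of $\op{Co}(P)$: one must be sure that when walking up (or down) from a point of $C\setminus A$ toward an extreme point of $C$, the walk cannot be ``blocked'' by re-entering $A$, and the right way to see this is precisely the convexity dichotomy above, which guarantees that $A$ obstructs $x$ from at most one side. In part (2) the corresponding difficulty is milder and is handled entirely by the observation that a maximal element of $C\setminus A$ cannot be a meet of strictly larger members of $C$ without those members, and so the meet, lying in $A$; everything else --- the AEP for the two closure operators, atomisticity, and the fact that removing an extreme point from a closed set yields a lower cover --- is routine.
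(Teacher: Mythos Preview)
Your proposal is correct and follows essentially the same approach as the paper. In both parts the paper, like you, reduces strong coatomicity to locating a single point of $C\setminus A$ whose removal yields a closed lower cover of $C$ containing $A$: in (1) via the convexity dichotomy (nothing in $A$ strictly below, or strictly above, the chosen $x$) followed by a descent/ascent to a minimal/maximal element of $C$, and in (2) by taking a maximal element of $C\setminus A$. Your explicit packaging through extreme points, and your verification of atomisticity (which the paper states but does not argue), are minor elaborations rather than a different route.
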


Note that in each example, the closure operator is algebraic. 

\begin{proof}
(1)  A subset $A$ of an ordered set $P$ is \emph{convex} if 
$a_1$, $a_2 \in A$ and $a_1 \leq x \leq a_2$ implies $x \in A$. 
We know that the convex hull operator on an ordered set satisfies
the AEP; it remains to show that if $P$ has the property that every
chain is finite, then $\op{Co}(P)$ is strongly coatomic.  

Suppose that $P$ has that property, and that $A<B$ in $\op{Co}(P)$.
Let $b_0 \in B \setminus A$.  Then either
$\op{id}(b_0) \cap A = \emptyset$ or $\op{fil}(b_0) \cap A = \emptyset$,
w.l.o.g.~the former.  Choose $b_1$ minimal in $B \cap \op{id}(b_0)$.
Then $A \subseteq B \setminus \{ b_1 \} \prec B$ in $\op{Co}(P)$.
Thus $\op{Co}(P)$ is strongly coatomic.

(2) Now consider $\op{Sub}_{\meet}(S)$ for a meet semilattice $S$.
Again, we know that the subsemilattice operator satisfies the AEP, and
it remains to show that if $S$ satisfies the ACC,
then $\op{Sub}_{\meet}(S)$ is strongly coatomic.  
But this is easy:
if $A < B$ in $\op{Sub}_{\meet}(S)$, then we can choose $b_0$ maximal in $B \setminus A$
to obtain $A \leq B \setminus \{ b_0 \} \prec B$.
\end{proof}

We can even combine these examples:  if $S$ is a meet semilattice in which every 
principal filter $\op{fil}(x)$ is a finite tree,
then the lattice of convex subsemilattices of $S$ is a strongly coatomic, lower
continuous convex geometry.  See Adaricheva \cite{KA88} and Cheong and Jones \cite{CJ}.

In a similar vein, if $(P,\leq)$ is an ordered set in which every chain is finite 
and every interval is finite, then the lattice of suborders of $\leq$ on $P$ is a 
strongly coatomic and lower continuous convex geometry.  See Semenova \cite{MS99}.

Another construction yields strongly coatomic, lower continuous convex geometries that need not be atomistic.  
Our inspiration is the fact that a geometric lattice is isomorphic to the ideal 
lattice of its finite dimensional elements.  (There is no chance for a similar 
characterization here, since for any non-limit ordinal $\alpha$, the dual $\alpha^d$ 
is a strongly coatomic, lower continuous convex geometry.)  
Our construction uses J\'onsson and Rival's 
characterization of join semidistributive varieties \cite{JR79}.  

Define certain lattice terms recursively:  for $k \geq 0$, 
\begin{alignat*}{2}
y_0 &= y &  z_0 &= z \\
y_{k+1} &= y \meet (x \join z_k)\qquad  & z_{k+1} &= z \meet (x \join y_k) .
\end{alignat*}
Then consider the lattice inclusions
\[ \op{SD}_{\join}(k) \qquad y_k \leq x \join (y \meet z) \ . \]
These are equivalent to the corresponding identities $x \join y_k \approx x \join (y \meet z)$.
For example, $\op{SD}_{\join}(1)$ is equivalent to the distributive law.

\begin{lm} \cite{JR79}
Let $\mathcal V$ be a lattice variety.  Then every lattice in $\mathcal V$ is join semidistributive if and only if\/ $\mathcal V$ satisfies $\op{SD}_{\join}(n)$ for some $n < \omega$.
\end{lm}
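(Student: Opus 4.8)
The plan is to prove the two implications separately; the backward one (from a single $\op{SD}_\join(n)$ to join semidistributivity of every lattice in $\mathcal V$) is routine, and the forward one is the substantial part.

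\emph{Backward direction.} Suppose $\mathcal V\models\op{SD}_\join(n)$ and let $w=x\join y=x\join z$ in some $L\in\mathcal V$. Under this hypothesis the recursive terms collapse: $z_1=z\meet(x\join y)=z\meet(x\join z)=z$ and $y_1=y\meet(x\join z)=y\meet(x\join y)=y$, and a simultaneous induction gives $y_k=y$, $z_k=z$ for all $k$. Hence $y=y_n\le x\join(y\meet z)=:u$ by $\op{SD}_\join(n)$, and then $w=x\join y\le x\join u=u\le x\join y=w$, so $w=x\join(y\meet z)$. Thus every lattice in $\mathcal V$ satisfies $\op{SD}_\join$.

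\emph{Forward direction.} First I would reduce to the free lattice $F=F_{\mathcal V}(x,y,z)$: an inclusion between three-variable terms holds throughout $\mathcal V$ exactly when it holds for the free generators of $F$, so it is enough to produce one $n$ with $y_n\le x\join(y\meet z)$ in $F$ for the free generators. Since $F$ itself lies in $\mathcal V$ and is therefore join semidistributive, it suffices to prove: in any join semidistributive lattice $L$ and for any $x,y,z\in L$, one has $y_n\le u$ for some $n$, where $u:=x\join(y\meet z)$. Put $a_k=x\join y_k$ and $b_k=x\join z_k$. One checks that $y_k,z_k\ge y\meet z$ (so $y_k\meet z_k=y\meet z$ and $a_k,b_k\ge u$), that $a_k$ and $b_k$ are weakly decreasing, that $a_{k+1}=x\join(y\meet b_k)\le b_k$ and $b_{k+1}\le a_k$, and that $y_n\le u$ is equivalent to $a_n=u$. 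So $(a_k,b_k)$ is the weakly decreasing orbit of the order-preserving operator $(\alpha,\beta)\mapsto(x\join(y\meet\beta),\,x\join(z\meet\alpha))$, and if two consecutive pairs agree the orbit is eventually a constant pair $(a,b)$. Then $a=x\join(y\meet b)$ and $b=x\join(z\meet a)$; passing to the limit in $a_{k+1}\le a_k\meet b_k$ forces $a\le b$, and symmetrically $b\le a$, so $a=b$; and then $a=x\join(y\meet a)=x\join(z\meet a)$ gives, by $\op{SD}_\join$, $a=x\join\bigl((y\meet a)\meet(z\meet a)\bigr)=x\join(y\meet z\meet a)\le u$, hence $a=u$ and $y_n\le u$ for all large $n$.

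Everything therefore hinges on the main obstacle: \emph{showing that the orbit $(a_k,b_k)$ reaches a fixed point after finitely many steps}. This is the heart of the Jónsson--Rival argument. My plan here would be a contradiction argument: if no two consecutive pairs coincide, then — using the dichotomy that $a_k$ is eventually constant iff $b_k$ is — both sequences are eventually strictly decreasing, and the same use of $\op{SD}_\join$ as above shows $a_k\ne b_k$ and $a_k\meet b_k>u$ for every $k$, so one is left with an infinite strictly descending configuration of the elements $a_k,b_k$ of a very rigid shape. Ruling this out in a join semidistributive lattice — whether by locating a forbidden finite sublattice, by a compactness/König-type argument, or by exploiting that we may take $L=F_{\mathcal V}(x,y,z)$ and use the structure of relatively free lattices in the variety — is the genuinely hard step, and I would expect the bulk of the effort to go there.
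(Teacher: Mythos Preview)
The paper does not prove this lemma; it is quoted from J\'onsson and Rival and used as a black box, so there is no proof in the paper to compare your attempt against.

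On the merits of your attempt: the backward direction is correct, and in the forward direction the reduction to the relatively free lattice $F=F_{\mathcal V}(x,y,z)$ is right, as is your analysis of what happens \emph{once} the orbit $(a_k,b_k)$ stabilizes. The difficulty is precisely where you place it, but the route you propose toward it cannot work as written. You say ``it suffices to prove: in any join semidistributive lattice $L$ and for any $x,y,z\in L$, one has $y_n\le u$ for some $n$.'' That statement is false. The absolutely free lattice $FL(3)$ is join semidistributive, yet by Whitman's solution to the word problem the elements $y_k$ form a strictly decreasing sequence there and never fall below $x\join(y\meet z)$. Hence knowing merely that $F$ satisfies $\op{SD}_\join$ is not enough to force stabilization; one has to use that \emph{every} lattice in $\mathcal V$, in particular every quotient of every sublattice of $F$, is join semidistributive.

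That is essentially how the J\'onsson--Rival argument goes: from a non-stabilizing configuration one manufactures, inside $\mathcal V$, one of a short finite list of lattices that fail $\op{SD}_\join$ outright (e.g.\ $M_3$), contradicting the hypothesis on $\mathcal V$. Your closing parenthetical about ``locating a forbidden finite sublattice'' and ``exploiting $L=F_{\mathcal V}(x,y,z)$'' points in the right direction, but that step is the entire content of the theorem rather than a cleanup, and the plan as it stands does not yet contain the idea that carries it.
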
 

Let $\mathbf{SD}_\join(n)$ be the variety of all lattices satisfying $\op{SD}_\join(n)$.

\begin{thm}
Let $L_0$ be a lattice with the following properties.
\begin{itemize}
\item $\op{fil}(x)$ is finite for each $x \in L_0$. 
\item $L_0 \in \mathbf{SD}_{\join}(n)$ for some $n < \omega$. 
\item $L_0$ is lower semimodular.
\end{itemize}
Then the filter lattice $L = \op{Fil}(L_0)$ is an strongly coatomic, lower continuous convex geometry.
\end{thm}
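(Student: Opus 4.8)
The plan is to realize $L=\op{Fil}(L_0)$, ordered by reverse inclusion, as $\op{Id}(M)^{d}$ where $M:=L_0^{d}$, and to check that it satisfies the hypotheses and condition~(3) of Theorem~\ref{SCSgeom}. First I would record the easy structural facts: since every principal filter of $L_0$ is finite, each contains a maximal element of $L_0$, such an element is maximal in $L_0$, and it is unique, so $L_0$ has a greatest element $\mathbf 1$; hence every filter contains $\mathbf 1$, intersections of filters are filters, and $L$ is a complete lattice with greatest element $\{\mathbf 1\}$ and least element $L_0$. The lattice $M$ is upper semimodular (dual to lower semimodularity of $L_0$) and has all principal ideals finite. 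Since $\op{Id}(M)$ is algebraic, $L$ is dually algebraic, hence lower continuous. By Theorem~\ref{SCSgeom} it then suffices to prove that $L$ is spatial, strongly coatomic, join semidistributive and lower semimodular, and to invoke the implication (3)$\Rightarrow$(1).

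Spatiality I would get by a Zorn argument: given a filter $F_0$ and $x\in L_0\setminus F_0$, a filter $F^{*}\supseteq F_0$ maximal with respect to $x\notin F^{*}$ exists (filters are closed under unions of chains), and $F^{*}$ is completely join irreducible in $L$, since its only cover in the inclusion order is $\langle F^{*}\cup\{x\}\rangle$ and every filter properly containing $F^{*}$ contains $x$; letting $x$ range over $L_0\setminus F_0$ shows that $F_0$ is the intersection (join in $L$) of the completely join irreducibles below it. Join semidistributivity should follow from the standard fact that a lattice and its ideal lattice satisfy the same identities (for a term $t$ one has $t(I_1,\dots,I_k)=\bigcup\{\op{id}(t(a_1,\dots,a_k)):a_j\in I_j\}$, by induction on $t$): since $L_0\in\mathbf{SD}_{\join}(n)$, the lattice $M=L_0^{d}$ satisfies the dual identity $\op{SD}_{\meet}(n)$, hence so does $\op{Id}(M)$, hence $\op{Id}(M)$ is meet semidistributive by the dual of the lemma of J\'onsson and Rival \cite{JR79}; dualizing, $L$ is join semidistributive.

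The core of the proof, and the step I expect to be the main obstacle, is a description of the covers of $\op{Id}(M)$. I would prove: for an ideal $D$ of $M$ and $p\in M\setminus D$, the set $\op{id}(p)\cap D$ is finite and up-directed, so it has a largest element $a_{D}:=\Join(\op{id}(p)\cap D)\in D$, and $a_{D}<p$; if moreover $a_{D}\prec p$ in $M$, then $\langle D\cup\{p\}\rangle$ covers $D$ in $\op{Id}(M)$, and conversely every cover $D\prec D'$ arises this way (take $x\in D'\setminus D$ and a cover $p$ of $\Join(\op{id}(x)\cap D)$ inside the finite interval $[\Join(\op{id}(x)\cap D),\,x]$). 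The covering claim is where upper semimodularity of $M$ enters: if $D\subsetneq K'\subseteq\langle D\cup\{p\}\rangle$ and $w\in K'\setminus D$, then $w\le p\vee d$ for some $d\in D$, and $w':=w\vee d\vee a_{D}$ lies in $K'\setminus D$ and in $[a_{D}\vee d,\,p\vee d]$, which is a prime quotient by upper semimodularity and $a_{D}\prec p$ (the case $a_{D}\vee d=p\vee d$ would force $p\le a_{D}\vee d\in D$); hence $w'=p\vee d\ge p$, so $p\in K'$ and $K'=\langle D\cup\{p\}\rangle$. From this I would deduce: (i) for ideals $I\subsetneq J$ and $x\in J\setminus I$, the ideal $\langle I\cup\{p\}\rangle$ built as above covers $I$ and lies in $J$, so $\op{Id}(M)$ is strongly atomic and $L$ is strongly coatomic; and (ii) if $A\prec B=\langle A\cup\{p\}\rangle$ with $a:=a_{A}\prec p$ and $C$ is any ideal, then $B\vee C=\langle(A\vee C)\cup\{p\}\rangle$ and $\Join(\op{id}(p)\cap(A\vee C))$ must equal $a$ (it lies between $a$ and $p$, and $[a,p]$ is prime), so the covering claim gives $A\vee C\preceq B\vee C$; hence $\op{Id}(M)$ is upper semimodular and $L$ is lower semimodular.

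Finally, with $L$ a strongly coatomic, spatial lattice satisfying condition~(3) of Theorem~\ref{SCSgeom}, condition~(1) holds as well, so $L$ is the closure lattice of a convex geometry; combined with the lower continuity established at the start, this proves the theorem. The hard part is the analysis of the covers of $\op{Id}(M)$ in the third paragraph---the interaction between upper semimodularity of $M$ and the finiteness of the principal ideals of $M$---while the remaining ingredients (lower continuity from dual algebraicity, spatiality from the Zorn argument, and join semidistributivity from the identity-preservation of ideal lattices together with \cite{JR79}) are routine or standard.
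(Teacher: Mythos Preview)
Your proof is correct and follows the same high-level strategy as the paper: establish that $L$ is lower continuous, join semidistributive, strongly coatomic, and lower semimodular, then invoke condition~(3) of Theorem~\ref{SCSgeom}. The differences are in execution. The paper works directly with filters of $L_0$: for strong coatomicity it picks $k$ maximal in $F\setminus G$ (necessarily meet irreducible) and shows $\op{fil}(G,k)$ is a lower cover of $G$ via lower semimodularity, and it proves lower semimodularity of $L$ by a separate direct argument. You instead pass to $\op{Id}(L_0^{d})$ and prove a single lemma characterizing the covering relation there---covers of $D$ are exactly the ideals $\langle D\cup\{p\}\rangle$ with $\Join(\op{id}(p)\cap D)\prec p$---from which both strong coatomicity and upper semimodularity of $\op{Id}(L_0^d)$ (hence lower semimodularity of $L$) drop out uniformly; this is arguably cleaner. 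Your explicit Zorn argument for spatiality is unnecessary work: once you have strongly coatomic and lower continuous, the paper's earlier lemma (strongly coatomic plus lower continuous implies strongly spatial) gives it for free, and in any case dually algebraic already implies spatial. Both routes handle join semidistributivity identically, via identity-preservation of filter/ideal lattices together with the J\'onsson--Rival lemma.
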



\begin{proof}
As usual, we order the filter lattice by reverse set inclusion:  $F \leq G$ iff $F \supseteq G$.
The filter lattice of any lattice is lower continuous and satisfies the
equations of the original, in particular $\op{SD}_{\join}(n)$ in this case. 
It remains to show that $L$ is strongly coatomic and lower semimodular.

Suppose $F < G$ in $L$, i.e., $F \supset G$.  Let $k$ be an element maximal
in $F \setminus G$, and note that $k$ is meet irreducible.  We claim that
the filter generated by $G \cup \{ k \}$, say $H = \op{fil}(G,k)$, satisfies $F \leq H \prec G$. 
Let $\ell$ be any element
of $H \setminus G$.  Then $\ell \geq g \meet k$ for some $g \in G$, and we
may take $g \leq k^*$, where $k^*$ denotes the unique upper cover of $k$ in $L_0$.  
In that case, by lower semimodularity, 
$g \succ g \meet k$, whence also $g \meet k = g \meet \ell$.  
It follows that $H = \op{fil}(G,\ell)$, and since $\ell$ is arbitrary, $H \prec G$. 
Thus $L$ is strongly coatomic.

The proof that $L$ is lower semimodular is an adaptation
of that for the corresponding dual claim in Theorem 11.1 of \cite{LTN}.  
Assume that $L$ is lower semimodular, and suppose that 
$F \prec F \join G = F \cap G$ in $\op{Fil}(L_0)$.
Choose an element $a$ maximal in $F \setminus G$, and note that
$a$ is meet irreducible, thus by the finiteness of $\op{fil}(a)$
completely meet irreducible.  
Then $F = (F \join G) \,\meet\, \op{fil}(a)$, and hence 
$F \meet G = \op{fil}(a) \,\meet\, G$.  
Let $x$ be any element in $(F \meet G) \setminus G$.  
Since $x \in F \meet G$, there exists $g \in G$ such that 
$x \geq a \meet g$.  
Because $L$ is lower semimodular, $a \meet g \prec a^* \meet g$.  
On the other hand, every element of $L$ is a meet of finitely many 
meet irreducibles, so $x \notin G$ implies there exists a meet
irreducible element $b \geq x$ with $b \notin G$. 
Now $b \geq a \meet g$ and $b \ngeq g$, so $b \meet g = a \meet g$, 
whence $a \geq b \meet g$.  Thus 
$\op{fil}(b) \,\meet\, G = \op{fil}(a) \,\meet\, G = F \meet G$; 
if follows \emph{a fortiori} that $\op{fil}(x) \,\meet\, G = F \meet G$.
As this holds for every $x \in (F \meet G) \setminus G$, we have 
$F \meet G \prec G$, as desired.
\end{proof}

So in particular, we could take $L_0$ to be the elements of finite depth
in an infinite direct product of finite convex geometries that satisfy 
$\op{SD}_{\join}(n)$ for some fixed $n$.

The examples so far have all been algebraic closure operators.  For a non-algebraic example of an strongly coatomic, lower continuous convex geometry,
we form a closure system $( \omega, \gamma)$ on the natural numbers $\omega$.  
Define a subset $S \subseteq \omega$ to be $\gamma$-closed if either $0 \in S$ or $S$ is finite.
Clearly the closed sets are closed under arbitrary intersections, so $\op{Cld}(\omega,\gamma)$ is a complete lattice.
Moreover, the lower covers of a nonempty closed set $S$ 
are all sets $S \setminus \{ x \}$ with $x \in S$ if $S$ is finite, 
and all sets $S \setminus \{ x \}$ with $0 \ne x \in S$ if $S$ is infinite.
It follows easily that $\op{Cld}(\omega,\gamma)$ is strongly coatomic, and it has the property that $T \prec S$ implies
$|S \setminus T|=1$ of Theorem~\ref{T:CharCS}.  
Note that $\op{Cld}(\omega,\gamma)$ is a sublattice of the subset lattice $\op{Pow}(\omega)$, closed under arbitrary
intersections and finite unions (but not infinite unions).  This clearly makes $\op{Cld}(\omega, \gamma)$ lower continuous.
To see that it is not algebraic, we show that it is not upper continuous.  For $k \ge 1$, let $F_k = \{ 1, \dots, k \}$.
This is a chain with $\Join F_k = \omega$, and hence
\[  \{ 0 \} = \{ 0 \} \meet \Join F_k \supset \Join ( \{ 0 \} \meet F_k ) = \emptyset  .\] 
Thus $\op{Cld}(\omega,\gamma)$ is a non-algebraic strongly coatomic, lower continuous convex geometry.

\section{Discussion}

In some sense, algebraic closure operators are the natural settings for any type
of geometry.  On the other hand, Crawley and Dilworth's setting of dually algebraic
and strongly coatomic gives the nice equivalence of local distributivity and
unique representations.  Since dually algebraic lattices are
lower continuous, the hypothesis of strongly coatomic, lower continuous is slightly 
weaker.   Indeed, most of the equivalences for convex geometries
remain valid for lattices that are strongly coatomic and spatial.
The question remains whether local distributivity and lower semimodularity are equivalent
to the other characterizations  
under the weaker hypothesis of a strongly coatomic, spatial complete lattice.   
Thus the question can be phrased:  \emph{Does every strongly coatomic, spatial,
locally distributive and lower semimodular closure system satisfy the anti-exchange
property?}

Strong coatomicity is a strong assumption.  Some progress has been made towards 
relaxing this to the weakly atomic property (every interval contains a cover) in
\cite{AN15} and \cite{AP11}, but more could doubtless be done along these lines.

\medskip

The authors would like to thank the referee for prompting us to consider the 
general class of strongly coatomic and spatial lattices,
and other helpful suggestions.

\end{document}